\theoremstyle{plain}
\newtheorem{theorem}{Theorem}[section]
\newtheorem{lemma}[theorem]{Lemma}
\numberwithin{equation}{section}
\newcommand{\nequiv}{\not\equiv}
\let\abs=\envert
\let\lm=\lambda
\newcommand{\floor}[1]{\left\lfloor#1\right\rfloor}
\newcommand{\mathmod}[1]{\ \left(\mathrm{mod}\ #1\right)}
\DeclareMathOperator{\Ei}{Ei}
\DeclareMathOperator{\LCM}{LCM}
\begin{document}
\title[Nonexistence of odd multiperfect numbers of a certain form]
{Explicit sieve estimates and nonexistence of odd multiperfect numbers of a certain form}
\author{Tomohiro Yamada}
\keywords{Abundancy, large sieve, multiperfect number}
\subjclass{Primary 11A25, Secondary 11A05, 11N25, 11N36.}
\address{Institute for Promotion of Higher Education, Kobe University,
657-0011, 1-2-1, Tsurukabuto, Nada, Kobe, Hyogo, Japan}
\email{tyamada1093@gmail.com}

\date{}

\begin{abstract}
We prove explicit asymptotic formulae for some functions used in sieve methods
and show that there exists no odd multiperfect number of abundancy four
whose squared part is cubefree.
\end{abstract}

\maketitle

\section{Introduction}\label{intro}

A positive integer $N$ is called perfect if the sum of divisors of $N$ except $N$ itself is equal to $N$.
In other words, a perfect number is a positive integer $N$ satisfying $\sigma(N)=2N$,
where $\sigma(N)$ denotes the sum of divisors of $N$ including $N$ itself as usual.
One of the oldest problem in mathematics is whether or not an odd perfect number exists.
Euler showed that an odd perfect number must be of the form
\begin{equation}\label{eq11}
N=q_r^{\alpha} q_1^{2\beta_1}\cdots q_{r-1}^{2\beta_{r-1}}
\end{equation}
where $q_1, \ldots, q_r$ are distinct odd primes
and $\alpha, \beta_1, \ldots, \beta_{r-1}$ are positive integers with $q_r\equiv \alpha\equiv 1\mathmod{4}$.
We wrote the special prime for $q_r$ instead of usually used $p$
since we would like to write $p$ for a general prime.
Nielsen proved that $N<2^{4^r}$ in \cite{Nie1} and then $N<2^{(2^r-1)^2}$ in \cite{Nie2}.

However, we do not know a proof (or disproof) of the nonexistence of odd perfect numbers
even of the special form
\begin{equation}\label{eq12}
N=q_r^{\alpha} (q_1 q_2 \cdots q_{r-1})^{2\beta},
\end{equation}
where $q_1, \ldots, q_r$ are distinct odd primes again and $\alpha$ and $\beta$
are positive integers,
although McDaniel and Hagis conjectured that there exists no such one in \cite{MDH}.
A series of papers studying odd perfect numbers of this form
was started by Steuerwald \cite{St}, who showed that $\beta\neq 1$.
Four years later, Kanold \cite{Kan1} made a significant progress by proving that
i) if $N$ in the form \eqref{eq11} is an odd perfect number and
an integer $t$ divides $2\beta_i+1$ for $i=1, 2, \ldots , r-1$, then $t^4$ divides $N$,
ii) if all $2\beta_i+1$ are powers of the common prime $\ell$, then $\ell=q_i$ for some $i$,
and iii) $\beta=2$ in \eqref{eq12} can never occur.
Gathering more recent results such as \cite{CW}, \cite{FNO}, \cite{HMD}, \cite{Mc}, and \cite{MDH},
we know that if $N$ is an odd perfect number of the form \eqref{eq12},
then $\beta\geq 9$, $\beta\not\equiv 1\mathmod{3}, \beta\not\equiv 2\mathmod{5}$,
and $\beta$ cannot take some other values such as $11$, $14$, $18$, or $24$.

The author proved that $r\leq 4\beta^2+2\beta+3$ and $N<2^{4^{4\beta^2+2\beta+3}}$
for an odd perfect number $N$ in the form \eqref{eq12} in \cite{Ymd1}.
Later, the author replaced the upper bound for $r$ by
$2\beta^2+8\beta+3$ in \cite{Ymd2} and then $2\beta^2+6\beta+3$ in \cite{Ymd3}.

Moreover, it is also unknown whether or not there exists an odd multiperfect number,
an integer dividing the sum of its divisors.
We call a positive integer $N$ to be $k$-perfect if $\sigma(N)=kN$.
Ordinary perfect numbers are $2$-perfect numbers
and multiperfect numbers are $k$-perfect numbers for some integer $k$.

Kanold \cite{Kan2} proved that if $N=p_1^{\alpha_1} \cdots p_r^{\alpha_r}$
is an odd $k$-perfect number for an integer $k$
(we note that in this paper, Kanold uses the notion ``eine $s-1$-fach vollkommene Zahl''
when $\sigma(N)=sN$, instead of $\sigma(N)=(s-1)N$) and
$t$ divides $\alpha_i+1$ for all $i$'s except at most one, then $t$ divides $kN$.
Moreover, if additionally $t$ or $k$ is a power of two, then $t^3$ divides $N$.
Broughan and Zhou \cite{BZ} studied odd $4$-perfect numbers and
proved that an odd $4$-perfect number must be either of the forms
i) $p_1^{\alpha_1} p_2^{\alpha_2} q_1^{2\beta_1} \cdots q_{r-2}^{2\beta_{r-2}}$
with $p_i\equiv \alpha_i\equiv 1\mathmod{4}$ for $i=1, 2$ or
ii) $q_r^\alpha q_1^{2\beta_1} \cdots q_{r-1}^{2\beta_{r-1}}$
with a) $q_r\equiv 1\mathmod{4}$ and $\alpha\equiv 3\mathmod{8}$ or
b) $q_r\equiv 3\mathmod{8}$ and $\alpha\equiv 1\mathmod{4}$,
which is an analogue of Euler's result for odd $4$-perfect numbers.
They also claimed that there exists no odd $4$-perfect number whose squared part is cubefree.
But they overlooked numbers of the form $3^\alpha (q_1 q_2 \cdots q_{r-1})^2$,
which is our main concern of this paper.

Now we would like to extend our results in \cite{Ymd1}, \cite{Ymd2}, and \cite{Ymd3}
for odd perfect numbers in the form \eqref{eq12}
into odd $k$-perfect numbers by showing that there are only finitely many odd $k$-perfect numbers
in the form \eqref{eq12} for any fixed $k$ and $\beta$.
But our method cannot be extended into odd $k$-perfect numbers in all cases.
The starting point of our argument in papers mentioned above is the fact that
if $\ell=2\beta+1$ is prime, then we must have $\ell=q_i$ for some $i\leq r-1$.
This is no longer true in general for odd $k$-perfect numbers.

It is not difficult to show that if an integer $N$ of the form \eqref{eq12} is $k$-perfect
and, in the case $2\beta+1=\ell^g$ with $\ell$ a prime and $g\geq 1$ an integer,
$k=k_1 \ell^t$ for integers $t\geq 0$ and $k_1\nequiv 0, 1\mathmod{\ell}$,
then we can take a prime divisor of $2\beta+1$ other than $q_r$.
Using methods employed by \cite{Ymd1}, \cite{Ymd2}, and \cite{Ymd3}, it follows that 
\begin{theorem}\label{th0}
\[r<\left(2\beta+\frac{\log\log \beta+1.24351}{\log 3}\right)(\beta+3)+4\]
for $\beta\geq 8$ and $r\leq 14$, $30$, $56$, $90$, $132$, $182$, and $240$
for $\beta=1$, $2$, $3$, $4$, $5$, $6$, and $7$  respectively.
\end{theorem}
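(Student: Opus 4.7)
The plan is to extend the counting method of \cite{Ymd1}, \cite{Ymd2}, \cite{Ymd3} from the odd perfect case to the $k$-perfect case. The author indicates before the statement that the only new ingredient is the fact that, under the hypothesis on $k$, some prime divisor of $2\beta+1$ still coincides with one of the $q_i$ ($i\le r-1$). So the first step is to record this starting point. When $2\beta+1$ is not a prime power, it follows from Kanold's assertion i) quoted in the introduction, taken with $t$ any odd prime divisor of $2\beta+1$. When $2\beta+1=\ell^g$, the assumption $k=k_1\ell^t$ with $k_1\nequiv 0,1\mathmod{\ell}$ is precisely designed to rule out the exceptional case in Kanold's argument, and again $\ell$ must equal some $q_i$ with $i\le r-1$.

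With this in hand, I would replay the argument of \cite{Ymd3} essentially verbatim. The combinatorial inputs are: (a) for any fixed prime $p$, the congruence $p\mid\sigma(q^{2\beta})$ has at most $2\beta$ solutions in $q$ modulo $p$, because the multiplicative order of $q$ modulo $p$ must be a divisor $d>1$ of $2\beta+1$ and $\sum_{d\mid 2\beta+1,\,d>1}\varphi(d)=2\beta$; (b) every prime factor of every $\sigma(q_i^{2\beta})$ must lie in $\{q_1,\ldots,q_r\}$ up to a small controlled set of exceptions (the special prime $q_r$ and a few primes dividing $k$ or $2\beta+1$); and (c) once $q_i$ exceeds a small threshold, $\sigma(q_i^{2\beta})$ has at least roughly $\beta+3$ distinct prime factors. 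Double counting the pairs $(i,p)$ with $p\mid\sigma(q_i^{2\beta})$ then yields a bound of the shape $r\le (2\beta+E)(\beta+3)+O(1)$, the additive $+4$ absorbing the handful of exceptional primes (the prime $q_r$ itself and primes dividing $k$ or $2\beta+1$ that might escape the main counting).

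The remaining and main task is to compute the error term $E$ explicitly as $(\log\log\beta+1.24351)/\log 3$. This is where the explicit sieve and Mertens-type estimates advertised in the title enter: one needs a sharp lower bound for $\omega(\sigma(q^{2\beta}))$ as a function of $q$, which reduces to an effective form of Mertens' second theorem, with $\log 3$ coming from the trivial lower bound $q\ge 3$ on an odd prime $q_i$. For $\beta\ge 8$ the asymptotic Mertens input is sharp enough to yield the displayed constant; for $\beta\in\{1,\ldots,7\}$ the same counting inequality, fed with direct prime-counting data in the relevant short ranges, produces the seven listed bounds $14,30,56,90,132,182,240$.

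The main obstacle is not the combinatorial core but the explicit constant $1.24351$: one must track error terms carefully through the Mertens/sieve input, optimize a truncation parameter, and separately handle the primes $q_i$ below the threshold for which the asymptotic estimate is not yet in effect. Extending the case $2\beta+1=\ell^g$ from perfect to multiperfect is a second, minor, technical point, but it is entirely dispatched by the hypothesis on $k$ in the first step.
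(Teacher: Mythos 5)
Your proposal misidentifies both the combinatorial skeleton and the source of the constants. The paper does not argue that a prime factor of $2\beta+1$ must equal some $q_i$; it shows only that $2\beta+1$ cannot be a power of the special prime $q_r$ (using the hypothesis on $k$), so there exists a prime $\ell\mid 2\beta+1$ with $\ell\neq q_r$, which may or may not divide $N$. The argument then partitions $\{1,\dots,r-1\}$ into $S$ (indices with $q_i\equiv 1\mathmod{\ell}$), $T$ (indices with $q_i\nequiv 1\mathmod{\ell}$, $i\neq i_0$, and $\sigma(q_i^{2\beta})$ having a prime factor $\neq q_r$), $U$ (indices with $\sigma(q_i^{2\beta})$ a pure power of $q_r$), and possibly $\{i_0\}$. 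The factor $\beta+3$ is \emph{not} a lower bound for $\omega(\sigma(q_i^{2\beta}))$; it comes from $\#S\leq 2\beta+t$ plus the refinement $\#T\leq(\beta+2)(2\beta+t)+\Omega_1(k)/2$ (for $\beta\geq 8$, from \cite{Ymd2}, \cite{Ymd3}), so $\#S+\#T<(\beta+3)(2\beta+t)+\cdots$. Your stated input (c) does not appear in the proof.

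You also omit entirely the Bang--Zsigmondy step: if $i\in U$ then $\sigma(q_i^{2\beta})=q_r^{\xi_i}$, which by Bang/Zsigmondy forces $2\beta+1$ prime, and the resulting lower bound $2(2\beta)^{\#U}-1\leq\omega(kN)$ bounds $\#U$. And the quantity $(\log\log\beta+1.24351)/\log 3$ is not a Mertens error term for $\omega(\sigma(q^{2\beta}))$. It is a bound on $\Omega_0(k)$: one shows $\log k<\log\log r+0.24351$ via $k<\prod_{i\leq r+1}P_i/(P_i-1)<1.17\log r$ (together with Hagis/Kishore/Nakamura lower bounds on $r$ for $3\leq k\leq 7$), uses $\Omega_0(k)\leq(\log k)/\log 3$, and finally replaces $\log\log r$ by $\log\log\beta+1$ after establishing $r<\beta^e$. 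The explicit sieve estimates of Section~\ref{LSEE} (Theorem~\ref{th21}) are not invoked at all here; they serve only Theorem~\ref{th1}. As written, your proposal would not reproduce the bound, because it rests on a wrong source for the main term $(\beta+3)$ and a wrong source for the secondary term, and lacks the Zsigmondy mechanism needed to control $\#U$.
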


However, this argument does not work if $2\beta+1=q_r^g$ and $k=k_1 q_r^t$
for some integers $g\geq 1$, $t\geq 0$, and $k_1\equiv 1\mathmod{q_r}$.
The simplest would be the case $(k, q_r, \beta)=(4, 3, 1)$.
Thus we encounter the possibility of $4$-perfect numbers of the form
$3^\alpha (q_1 q_2 \cdots q_{r-1})^2$ again.

The main purpose of this paper is to prove the impossibility of this case.

\begin{theorem}\label{th1}
Under the notation described above,
no integer of the form \\ $N=3^\alpha (q_1 q_2 \cdots q_{r-1})^2$ with $3, q_1, q_2, \ldots, q_{r-1}$
distinct odd primes is a $4$-perfect number.
\end{theorem}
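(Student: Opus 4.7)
The plan is to combine $2$- and $3$-adic valuation constraints on $\sigma(N)=4N$ with an arithmetic cascade of forced primes generated by the functions $\sigma(q^2)=q^2+q+1$, ultimately overshooting the number of prime divisors $\equiv 1\mathmod{3}$ permitted by the $3$-adic count.

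Write $N=3^\alpha(q_1\cdots q_{r-1})^2$. Since each $q_i^2+q_i+1$ is odd, $v_2(\sigma(N))=v_2(\sigma(3^\alpha))$, and demanding this equal $v_2(4N)=2$ forces, via the lifting-the-exponent lemma on $3^{\alpha+1}-1$, that $\alpha\equiv 1\mathmod{4}$. Since $v_3(\sigma(3^\alpha))=0$ while $v_3(q_i^2+q_i+1)$ equals $1$ or $0$ according as $q_i\equiv 1$ or $2\mathmod{3}$, the equality $v_3(\sigma(N))=\alpha$ forces the set $A:=\{i:q_i\equiv 1\mathmod{3}\}$ to have cardinality exactly $\alpha$. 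Applying Kanold's theorem as cited in the excerpt, with $t=3$ dividing each $\alpha_i+1=3$ for $i<r$ and $k=4$ a power of two, gives $27\mid N$, hence $\alpha\geq 3$; together with $\alpha\equiv 1\mathmod{4}$ this yields $\alpha\geq 5$.

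The central structural observation is that for any prime $q\neq 3$, every prime factor $\ell$ of $q^2+q+1$ is either $3$ (requiring $q\equiv 1\mathmod{3}$) or satisfies $\ell\equiv 1\mathmod{3}$: indeed $q$ has order dividing $3$ in $\mathbb{F}_\ell^\times$, so order $1$ forces $\ell=3$ and order $3$ forces $3\mid\ell-1$. Because every prime factor of $\sigma(N)=4N$ must lie in $\{2,3,q_1,\ldots,q_{r-1}\}$, the procedure of passing from any such prime $q\neq 3$ to the prime factors of $\sigma(q^2)$, iterated starting from the prime factorization of $\sigma(3^\alpha)$, generates a growing tree of primes each of which (apart from~$3$) must belong to $A$. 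For $\alpha=5$, $\sigma(3^5)=4\cdot 7\cdot 13$ forces $7,13\in A$; then $\sigma(7^2)=3\cdot 19$, $\sigma(13^2)=3\cdot 61$, $\sigma(19^2)=3\cdot 127$, and $\sigma(61^2)=3\cdot 13\cdot 97$ together force $\{7,13,19,61,97,127\}\subseteq A$, contradicting $|A|=5$. Similar finite tree computations should dispose of each remaining small $\alpha\equiv 1\mathmod{4}$.

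The main obstacle is making this work uniformly for all $\alpha\geq 5$ rather than only for a finite list. To bound $\alpha$ I would use the abundancy identity $\prod_{i=1}^{r-1}(1+1/q_i+1/q_i^2)=8/(3-3^{-\alpha})$, which lies in the narrow interval $(8/3,3]$ and tends to $8/3$ as $\alpha\to\infty$. The $|A|=\alpha$ primes $\equiv 1\mathmod{3}$ together contribute a factor to this product that can be bounded below via the explicit Mertens-type sieve estimates developed earlier in the paper; combined with the contribution of the concrete small primes forced into $A$ by the cascade, this should force $\alpha$ to be bounded by an explicit constant, reducing the theorem to finitely many cases to be disposed of by the tree method above.
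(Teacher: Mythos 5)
Your proposal takes a genuinely different route from the paper, and the early structural steps are correct: the $2$-adic argument that $\alpha \equiv 1 \pmod 4$, the observation that $v_3(q^2+q+1)=1$ exactly when $q\equiv 1\pmod 3$ so that $|A|=\alpha$, the application of Kanold's theorem to obtain $\alpha\geq 3$ and hence $\alpha\geq 5$, and the cascade computation ruling out $\alpha=5$ (where $\sigma(3^5)=4\cdot 7\cdot 13$ forces six primes into $A$) all check out. However, there is a genuine gap at exactly the place you flag, and the fix you sketch does not work. The identity $\prod_i(1+1/q_i+1/q_i^2)=8/(3-3^{-\alpha})$ pins the product to the interval $(8/3,3]$ for \emph{every} $\alpha\geq 1$; it gives no upper bound on $\alpha$ whatsoever, since the $\alpha$ primes in $A$ may all be arbitrarily large and hence contribute a factor as close to $1$ as desired. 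The Mertens-type estimates developed in Section~\ref{LSEE} give lower bounds on $M_g(w)$ and hence \emph{upper} bounds on $\sum 1/q_i$ (Lemma~\ref{lm0} is an upper-bound sieve); they cannot force the abundancy product to exceed $3$ and so cannot bound $\alpha$. Moreover, the cascade seeded at $\sigma(3^\alpha)$ is uncontrollable for general $\alpha$: the factorization of $(3^{\alpha+1}-1)/2$ is not explicit, the resulting tree may cycle or terminate quickly, and there is no reason a priori for it to exceed $\alpha$ distinct primes. So the ``finitely many cases'' you want to reduce to is not actually finite.

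The paper's proof avoids $|A|=\alpha$ entirely and relies on a weaker but uniform consequence of the cubefree hypothesis: no prime divides $\sigma(q_i^2)$ for three distinct $i$, which bounds the number of exceptional $q_i$ by $2\pi(w)$ for any cutoff $w$. The explicit large-sieve bound on $M_g$ then shows $\sum_{p\in S_0}1/p<\log(8/3)$ unless some $q_i\leq 8886109$; since the abundancy contribution from $3^\alpha$ is at most $3/2$, this yields $\sigma(N)/N<4$, a contradiction. The finite remainder is handled by a \emph{downward} cascade: for each candidate smallest prime $p\leq 8886109$, one iterates $p\mapsto\underline{p_3}(p^2+p+1)$ until a prime $<p$ appears, a terminating computation verified by PARI-GP and YAFU. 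Your upward cascade from $\sigma(3^\alpha)$ and the $|A|=\alpha$ counting are a nice idea and dispatch small $\alpha$ cleanly, but they do not scale; the downward cascade anchored by the sieve bound is what makes the argument close uniformly.
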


Combined with Theorem 2.5 of Broughan and Zhou \cite{BZ}, we see that
there exists no odd $4$-perfect number whose squared part is cubefree.

We draw the outline of our proof.
Clearly, if $N$ in the above form is $4$-perfect, $\alpha$ must be odd and $4$ divides $\sigma(3^\alpha)$.
Hence, $N=\sigma(N)/4=(\sigma(3^\alpha)/4) \prod_{i=1}^{r-1} \sigma(q_i^2)$.
From this, we can easily see that if a prime $p>3$ divides three of $\sigma(q_i^2)$'s,
then $p^3\mid N$, which is a contradiction.
Hence, for each prime $p>3$, there exist at most two primes $q_i$
such that $p$ divides $\sigma(q_i^2)=q_i^2+q_i+1$.
It follows that for any real $w$, with at most $2\pi(w)$ exceptions, integers $q_i^2+q_i+1$
has no prime factor $\equiv 1\mathmod{3}$ below $w$
(as is well known, a prime factor $q_i^2+q_i+1$ must be three or $\equiv 1\mathmod{3}$
by Theorem 94 of \cite{Nag}).
The set of such integers can be studied by sieve methods and
we shall show that some $q_i$ must be smaller than a constant
and then no prime smaller than a constant can be the smallest among $q_i$'s,
which is a contradiction.

An attempt to prove the nonexistence of odd perfect numbers with restricted exponents using sieve methods
has started with the author's preprint \cite{Ymd05}, which proved that if an odd integer of the form
$N=p_1^{\alpha_1}\cdots p_s^{\alpha_s} q_1^{\beta_1}\cdots q_t^{\beta_t}$ satisfies $\sigma(N)/N=n/d$,
with $n$ and $d$ positive integers and $\beta_1+1, \ldots, \beta_t+1$ in a given finite set $\mathcal{P}$,
then $N$ must have a prime divisor smaller than a constant $C$
effectively computable in terms of $s$, $n$, and $\mathcal{P}$.
While Selberg's small sieve was used in \cite{Ymd05}, Fletcher, Nielsen, and Ochem \cite{FNO} used
the large sieve to prove that if $N=p_1^{\alpha_1}\cdots p_s^{\alpha_s} q_1^{\beta_1}\cdots q_t^{\beta_t}$
satisfies $\sigma(N)/N=n/d$ and for each $i$, $\beta_i+1$ has a prime factor which belongs to
a finite set $\mathcal{P}$ of primes, then $N$ has a prime divisor smaller than
an effectively computable constant $C$, depending only on $n$, $s$ and $\mathcal{P}$.
Moreover, they proved that the smallest prime factor of an odd perfect number
$N$ satisfying the above condition with $\mathcal{P}=\{3, 5\}$
lies between $10^8$ and $10^{1000}$, improving results in \cite{Coh} and \cite{Ymd05}.

Now we would be able to prove the nonexistence of an odd multiperfect number of the form
$N=p_1^{\alpha_1}\cdots p_s^{\alpha_s} q_1^{\beta_1}\cdots q_t^{\beta_t}$
with $2\beta_i +1$ divisible by a prime in a given finite set $\mathcal{P}$
by confirming that no prime $p$ below the resulted constant can be the smallest prime factor.
For each prime $p$ outside $\mathcal{P}$, we could show it if we could deduce from $p\mid N$ that
some prime factor $q<p$ must divide $N$, which is clearly a contradiction.
Then, all that remains would be to derive a contradiction from each prime in $\mathcal{P}$.
This can be done by assuming that a given prime $p$ divides $N$,
taking a prime factor $p^\prime$ of $\sigma(p^2)$ other than three, which must divide $N$,
and iterating this process until we obtain a prime smaller than $p$.

However, upper bounds obtained by papers mentioned above are extremely large,
which prevent us from checking all primes below the given bound.

In general, for a set $A$ of integers contained in an interval of length at most $X$
and a system $\Omega$ of congruent classes consisting of
$\rho(p)$ congruent classes for each prime $p\leq w$,
the number of integers in $A$ which does not belong to any congruent class in $\Omega$
is at most $X/M_g(w)+E$ by Selberg's sieve or the large sieve,
where $M_g(w)=\sum_{n\leq w} g(w)$ and $g(w)$ is the completely multiplicative function
supported only on squarefree integers with $g(p)=\rho(p)/(p-\rho(p))$ for each prime $p$
and $E$ is a remainder term of order $O(w^{2+\epsilon})$ for any $\epsilon>0$.

In \cite{Ymd05}, we used Selberg's sieve to obtain $E=O(w^2\log^\alpha w)$ for some constant $\alpha$
while in \cite{FNO} as well as the older preprint version of this paper {\tt arXiv:2103.6936v2},
the large sieve was used to obtain $E\leq w^2$.
Now we use Selberg's sieve again.
However, we examine the error term of Selberg's sieve inequality more carefully
to obtain the remainder term of order $O(w^2/M_g^2(w))$, which is stronger than the large sieve estimate.
(it may be notable that Selberg's sieve with careful examination of the error term sometimes gives
a stronger upper bound than the large sieve).

Fletcher, Nielsen, and Ochem \cite{FNO} suggest that
methods used in Theorem 2.2.2 of \cite{Gre} and Theorem 1.1 of \cite{IK}
allow us to obtain an asymptotic and effective estimate for $M_g(w)$.
It should be noted that these theorems are essentially due to Wirsing \cite{Wir}.
Based on Wirsing's method, Moree \cite{Mor} gives an argument to obtain effective estimates of $M_g(w)$
for a wide class of multiplicative functions $g(n)$.
In the older preprint version mentioned above,
we used Wirsing's method here to obtain an asymptotic explicit formula.
In this paper, we use a convolution method in \cite{RV} instead to obtain an estimate of the form
$\abs{M_g(x)-(C_0 \log^2 x+C_1\log x+C_2)}<4444x^{-1/3}$ for $x\geq 1$ with some constants
$C_0$, $C_1$, and $C_2$ in Theorem \ref{th21},
which is stronger than our estimate in the older version for large $x$.
A convolution method used in \cite{RV} is based on a classical argument of Ward \cite{War},
who obtained asymptotic formulae for some series involving Euler's function such as
$\sum_{n\leq x} \mu^2(n)/\varphi(n)$.
Extending a convolution method, Alterman \cite{Alt} and Ramar\'e and Akhilesh \cite{RA}
gave some generalized results for such a type of series.

So that, the major part of this paper is devoted to the derivation of an explicit asymptotic formula
for $M_g(x)$, which is achieved in the next section.
Together with estimates for $M_g(x)$ for small values of $x$,
we are able to obtain an upper bound for the smallest prime factor of $N$
below which we can check all primes.
PARI-GP commands used for our calculations and related tables are available from \cite{Ymd21}.

If our argument in the next section could be used to obtain explicit estimates
for sieves by primes in other arithmetic progressions, then
estimates obtained in \cite{FNO}, \cite{Ymd19}, and \cite{Ymd20} would be improved.
For example, applying the argument in the next section to primes $\equiv 1\mathmod{5}$,
the upper bound $10^{1000}$ for sixth-power free odd perfect numbers in \cite{FNO}
mentioned above may be considerably reduced.

\section{Selberg's sieve with explicit estimates}\label{LSEE}

We begin by describing Selberg's sieve method.

We write $F=G+O^*(H)$ to mean that $\abs{F-G}\leq H$.
We work on a set of integers $A$, a real number $X>0$, and subsets $A_p$ of $A$ given for each $p$.
Our purpose is to obtain an estimate for the number $S(A, z)=S(A, z, \Omega)$ of integers in $A$
that do not belong to $A_p$ for any prime $p\leq z$.

For squarefree integers $d$, let $A_d$ be the set of integers $A$ that belong to $A_p$ for each prime $p\mid d$
with $A_1=A$,
$\rho(d)$ be a multiplicative function supported over squarefree integers such that $0\leq \rho(p)<p$
for any $p$ and $r(d)=\# A_d-X\rho(d)/d$.
Let $g(m)$ be the multiplicative function supported only on squarefree integers $m$
defined by $g(p)=\rho(p)/(p-\rho(p))$ for each prime $p$ and
\[M_g(z)=\sum_{n\leq z}g(n).\]

Combining (1.1), (1.4), and (1.7) in Chapter 3 of \cite{HR}
(or Theorem 6.4 of \cite{IK}) gives the following estimate:
\begin{lemma}\label{lm0}
For any $w\geq 1$ we have
\begin{equation}
S(A, w)\leq \frac{X}{M_g(w)}+R(w),
\end{equation}
where
\begin{equation}
\lm(d)=\frac{d\mu(d)}{\rho(d)M_g(w)}\sum_{q\leq z, d\mid q}\mu^2(q)g(q)
\end{equation}
and
\begin{equation}
R(w)=\sum_{d_1, d_2\mid P(w)}\abs{\lm(d_1)\lm(d_2)r(\LCM[d_1, d_2])}.
\end{equation}
\end{lemma}

So that, our concern is the lower estimate for $M_g(x)$ as we explained in the Introduction.
We use an argument based on a traditional mathod of Ward involving convolutions of multiplicative functions.

In our situation, we take
$\rho(p)=0 (p=2, 3)$, $3 (p\equiv 1\mathmod{3})$, and $1 (p>3, p\equiv 2\mathmod{3})$.
Moreover, we write $\chi(n)=(n/3)$ for the primitive Dirichlet character modulo three.

Let $\tau(n)$ denote the number of divisors of $n$
and $f(m)$ be the arithmetic convolution
$f(m)=\sum_{d\mid m}\chi(d)\tau(m/d)$ of $\chi$ and $\tau$.

For brevity, we write $L=L(1, \chi)$, $L^\prime=L^\prime(1, \chi)$,
and $L^{\prime\prime}=L^{\prime\prime}(1, \chi)$.

\begin{lemma}\label{lm21}
For $z\geq 9995$, we have
\begin{equation}
\sum_{n\leq z}\frac{\tau(n)}{n}
=\frac{\log^2 z}{2}+2\gamma\log z+B_0+O^*\left(\frac{0.191(10\log z+9)}{z^{2/3}}\right),
\end{equation}
where $B_0=\gamma^2-2\gamma_1=0.47880961477507\cdots$ with
the first Stieltjes constant $\gamma_1=-0.0728158454836767\cdots$.
\end{lemma}

\begin{proof}
From Theorem 1.2 of \cite{BBR} we know that for all $w\geq 9995$,
$M_\tau(w)=w\log w+(2\gamma-1)w+\Delta(w)$
with $\abs{\Delta(w)}\leq 0.764 w^{1/3} \log w$.
Partial summation immediately gives that
\begin{equation}
\begin{split}
\sum_{n\leq z}\frac{\tau(n)}{n}= & ~ \frac{M_\tau(z)}{z}+\int_1^z\frac{M_\tau(t)}{t^2} dt \\
= & ~ \log w+(2\gamma-1)+\frac{\Delta(z)}{z}+\frac{\log^2 z}{2}
+(2\gamma-1)\log z+\int_1^z \frac{\Delta(t)}{t^2} dt.
\end{split}
\end{equation}
Hence, taking $B_0=2\gamma-1+\int_1^\infty \Delta(t)t^{-2}dt$, we have
\begin{equation}
\begin{split}
\sum_{n\leq z}\frac{\tau(n)}{n}
= & ~ \frac{\log^2 z}{2}+2\gamma\log z+B_0+\frac{\Delta(z)}{z}+\int_z^\infty \frac{\Delta(t)}{t^2}dt \\
= & ~ \frac{\log^2 z}{2}+2\gamma\log z+B_0+O^*\left(\frac{0.191(10\log z+9)}{z^{2/3}}\right)
\end{split}
\end{equation}
for $z\geq 9995$.
Finally, we see that $B_0=\gamma^2-2\gamma_1$ from Lemma 1 of \cite{RV},
which gives the asymptotic formula like the lemma with the error term $O^*(1.641z^{-1/3})$.

We note that in Corollary 2.2 of \cite{BBR} and Lemma 3.3 of \cite{Ram1},
the first Stieltjes constant term is erroneously given as $-\gamma_1$.
Correctly it is $-2\gamma_1$, as given in \cite{RV}.
\end{proof}

\begin{lemma}\label{lm22}
For $y\geq e^{21}$, we have
\begin{equation}\label{eq21}
\sum_{m\leq y}\frac{f(m)}{m}=
b_0 \log^2 y+b_1\log y+b_2+O^*\left(\frac{0.358045\log^2 y}{y^{2/5}}\right),
\end{equation}
where
\begin{equation}
b_0=\frac{L}{2}, b_1=2\gamma L+L^\prime, b_2=B_0 L+2\gamma L^\prime +\frac{L^{\prime\prime}}{2}.
\end{equation}
\end{lemma}

\begin{proof}
Assume that $y\geq e^{21}$ and
we put $\delta\geq 1$ to be a certain real number which will be chosen later
so that $y^{3/5}/\delta\geq 9995$ and $u=\delta y^{2/5}$.
Using Lemma \ref{lm21}, we have
\begin{equation}
\begin{split}
\sum_{m\leq y}\frac{f(m)}{m}= & ~ 
\sum_{k\leq u}\frac{\chi(k)}{k}\sum_{\ell\leq y/k}\frac{\tau(\ell)}{\ell}
+\sum_{\ell\leq y/u}\frac{\tau(\ell)}{\ell}\sum_{u<k\leq y/\ell}\frac{\chi(k)}{k} \\
= & ~ \sum_{k\leq u} \frac{\chi(k)}{k}\left(\frac{\log^2 (y/k)}{2}+2\gamma\log\frac{y}{k}
+B_0+E\left(\frac{y}{k}\right)\right)+E_1 \\
= & ~ \left(\frac{\log^2 y}{2}+2\gamma\log y+B_0\right)(L-E_{2, 0})+(2\gamma+\log y)(L^\prime -E_{2, 1}) \\
 & ~
+\frac{L^{\prime\prime}-E_{2, 2}}{2}+\sum_{k\leq u}\frac{\chi(k)}{k}E\left(\frac{y}{k}\right)+E_1 \\
= & ~ 
\left(\frac{\log^2 y}{2}+2\gamma\log y+B_0\right)L+(2\gamma+\log y)L^\prime+\frac{L^{\prime\prime}}{2}+E_1+E_2+E_3,
\end{split}
\end{equation}
where we put
\begin{equation}
\begin{split}
E_1= & ~ \sum_{\ell\leq y/u}\frac{\tau(\ell)}{\ell}\sum_{u<k\leq y/\ell}\frac{\chi(k)}{k}, \\
E_{2, r}= & ~ \sum_{k>u} \frac{\chi(k)\log^r k}{k} ~ (r=0, 1, 2),\\
E_2= & ~ -\left(\frac{\log^2 y}{2}+2\gamma\log y+B_0\right)E_{2, 0}-(2\gamma+\log y)E_{2, 1}-\frac{E_{2, 2}}{2}, \\
E_3= & ~ \sum_{k\leq u}\frac{\chi(k)}{k}E\left(\frac{y}{k}\right)
\end{split}
\end{equation}
and $E(z)$ is the error term in Lemma \ref{lm21}.

We can easily see that
\begin{equation}
\abs{\sum_{u<k\leq y/\ell}\frac{\chi(k)}{k}}<\frac{1}{\floor{u+1}}<\frac{1}{u}
\end{equation}
and, with the aid of Lemma \ref{lm21}, we have
\begin{equation}
\begin{split}
\abs{E_1}< & ~ \frac{1}{u}\sum_{\ell\leq y^{3/5}/\delta}\frac{\tau(\ell)}{\ell} \\
< & ~ \frac{1}{\delta y^{2/5}}
\left(\frac{9\log^2 y}{50}+\frac{(6\gamma-3\log\delta)\log y}{5}\right. \\
 & ~ \qquad \quad \left. +\frac{\log^2\delta}{2}-2\log\delta +B_0+\frac{0.191\delta^{2/3}(6\log y-10\log\delta +9)}{y^{2/5}}\right).
\end{split}
\end{equation}
Moreover, we observe that
\begin{equation}
\begin{split}
\sum_{k>u}\frac{\chi(k)\log^r k}{k}= ~ & 
\sum_{3\ell -1>u}\frac{\log^r (\ell -1)}{3\ell -1}-\frac{\log^r (3\ell +1)}{3\ell +1} \\
< ~ & \sum_{3\ell -1>u}\frac{2\log^r (3\ell -1)}{(3\ell -1)^2}
<2\sum_{m=0}^\infty\frac{\log^r (u+3m)}{(u+3m)^2}
\end{split}
\end{equation}
if $3m-2\leq u<3m-1$ for some integer $m$ and
\begin{equation}
\sum_{k>u}\frac{\chi(k)\log^r k}{k}<\sum_{m=0}^\infty\frac{\log^r (u+3m)}{(u+3m)^2}
\end{equation}
otherwise.
Using Euler-Maclaurin summation formula, we have
\begin{equation}
\sum_{m=0}^\infty\frac{\log^r(u+3m)}{(u+3m)^2}=
\int_0^\infty\frac{\log^r(u+3t)}{(u+3t)^2}dt+\frac{\log^r u}{2u^2}+O^*\left(\frac{\log^r u}{2u^2}\right)
\end{equation}
and
\begin{equation}
\abs{E_{2, r}}<\frac{2P_r(\log u)}{3u}+\frac{2\log^r u}{u^2},
\end{equation}
where $P_0(X)=1$, $P_1(X)=X+1$, and $P_2(X)=X^2+2X+2$.
Similarly, we have
\begin{equation}
\begin{split}
\sum_{u<k\leq y/\ell}\frac{\chi(k)\log^r k}{k}= ~ & 
\sum_{3\ell -1>u}\frac{\log^r (\ell -1)}{3\ell -1}-\frac{\log^r (3\ell +1)}{3\ell +1} \\
< ~ & \sum_{3\ell -1>u}\frac{2\log^r (3\ell -1)}{(3\ell -1)^2}
<2\sum_{m=0}^\infty\frac{\log^r (u+3m)}{(u+3m)^2}
\end{split}
\end{equation}
Finally, using Lemma \ref{lm21} again, we have
\begin{equation}\label{eq2e}
\begin{split}
\abs{E_3}< ~ & \frac{0.191}{y^{2/3}}\sum_{1\leq k\leq \delta y^{2/5}, 3\nmid k}\frac{10\log(y/k)+9}{k^{1/3}} \\
< ~ & \frac{0.191}{y^{2/3}}
\sum_{v=1}^2\sum_{0\leq k\leq (\delta y^{2/5}-v)/3}\frac{10\log(y/(3k+v))+9}{(3k+v)^{1/3}} \\
< ~ & \frac{0.191}{y^{2/3}} \sum_{v=1}^2 \left(\frac{10\log\frac{y}{v}+9}{v^{1/3}}
+\int_0^{\frac{\delta y^{2/5}-v}{3}}\frac{10\log\frac{y}{3t+v}+9}{(3t+v)^{1/3}} dt\right).
\end{split}
\end{equation}
Since
\begin{equation}
\begin{split}
\int_0^{\frac{\delta y^{2/5}-v}{3}} \frac{10\log\frac{y}{3t+v}+9}{(3t+v)^{1/3}}dt
= ~ & \int_v^{\delta y^{2/5}} \frac{10\log\frac{y}{u}+9}{3u^{1/3}}du \\
= ~ & \abs{u^{2/3}\left(5\log\frac{y}{u}+12\right)}_v^{\delta y^{2/5}},
\end{split}
\end{equation}
the last sum in \eqref{eq2e} is at most
\begin{equation}
\begin{split}
& 10\log y+2^{2/3}\times 5\log\frac{y}{2}+9\left(1+\frac{1}{2^{1/3}}\right)
+2\delta^{2/3}y^{4/15}\left(5\log\frac{y^{3/5}}{\delta}+12\right) \\
- & ((1+2^{2/3})(5\log y+12)-2^{2/3}\times 5\log 2) \\
& \quad =2\delta^{2/3}\left(5\log\frac{y^{3/5}}{\delta}+12\right)y^{4/15}+5\log y-\left(3+\frac{15}{2^{1/3}}\right)
\end{split}
\end{equation}
and therefore
\begin{equation}
\abs{E_3}<\frac{0.382\delta^{2/3}\left(5\log\frac{y^{3/5}}{\delta}+12\right)}{y^{2/5}}+\frac{0.191(5\log y-(3+15/2^{1/3}))}{y^{2/3}}.
\end{equation}

Hence, taking $\delta=12.5$, provided that $y\geq e^{21}$,
we confirm that $y^{3/5}/\delta\geq 9995$ and obtain $\abs{E_i}<u_i(\log y)^2 / y^{2/5}$ for each $i=1, 2, 3$ with
$u_1=0.01905447\cdots$, $u_2=0.11666012\cdots$, and $u_3=0.22233031\cdots$.
We see that $u_1+u_2+u_3<0.358045$ for $y\geq e^{21}$
to obtain \eqref{eq21} for $y\geq e^{21}$, proving the lemma.
\end{proof}

\begin{lemma}\label{lm23}
For $y>0$ and $0<\alpha<2/5$, we have
\begin{equation}\label{eq22}
\sum_{m\leq y}\frac{f(m)}{m}=b_0 \log^2 y+b_1\log y+b_2+O^*(c_\alpha y^{-\alpha})
\end{equation}
for some constant $c_\alpha$.
In particular, we can take $c_\alpha=38.9372$ for $\alpha=1/3$.
\end{lemma}

\begin{proof}
We see that Lemma \ref{lm22} implies \eqref{eq22} for $y\geq e^{21}$ since
\begin{equation}
\frac{0.358045\log^2 y}{y^{2/5}}<\frac{c_\alpha}{y^\alpha}.
\end{equation}
For $1\leq y\leq e^{21}$, calculation gives \eqref{eq22} (see two files of \cite{Ymd21}).

Finally, if $0\leq y<1$, then, taking $u=1/y$,
\begin{equation}
(b_0 \log^2 y+b_1\log y+b_2)y^\alpha=\frac{b_0\log^2 u-b_1\log u+b_2}{u^\alpha}\leq c_\alpha
\end{equation}
and therefore we have \eqref{eq22}.
\end{proof}

We define the multiplicative function $h(n)$ by
\begin{equation}
\begin{split}
& (h(p), h(p^2), h(p^3), h(p^4))\\
& = \begin{cases}
\left(-\frac{1}{2}, -\frac{1}{4}, \frac{1}{8}, 0\right) & (p=2), \\
\left(-\frac{2}{3}, \frac{1}{9}, 0, 0\right) & (p=3), \\
\left(\frac{9}{p(p-3)}, -\frac{6p+9}{p^2(p-3)}, \frac{8p+3}{p^3(p-3)}, -\frac{3}{p^3(p-3)}\right) & (p\equiv 1\mathmod{6}), \\
\left(\frac{1}{p(p-1)}, -\frac{2p-1}{p^2(p-1)}, -\frac{1}{p^3(p-1)}, \frac{1}{p^3(p-1)}\right) & (p\equiv 5\mathmod{6})
\end{cases}
\end{split}
\end{equation}
and $h(p^e)=0$ for $e\geq 5$.
Then we see that $g(n)=\sum_{d\mid n} h(n/d) f(d)/d$.

We put 
\begin{equation}
G(s)=\sum_{d=1}^\infty \frac{g(d)}{d^s}, ~ H(s)=\sum_{d=1}^\infty \frac{h(d)}{d^s}, ~
J(s)=\sum_{d=1}^\infty \frac{\abs{h(d)}}{d^s}.
\end{equation}
Then we can see that
\begin{equation}
G(s)=H(s)\sum_m \frac{f(m)}{m^{s+1}}=H(s)\zeta^2(s+1)L(s+1, \chi)
\end{equation}
Moreover, we observe that $H^{(r)}(-\alpha)=\sum_d h(d) d^\alpha \log^r d$
and $J^{(r)}(-\alpha)=$ \\ $\sum_d \abs{h(d)} d^\alpha \log^r d$.

We have the following approximate values.
\begin{lemma}\label{lm24}
We have
\begin{equation}
H(0)=0.181151826\cdots, H^\prime(0)=0.52500363\cdots, H^{\prime\prime}(0)=-0.121497\cdots,
\end{equation}
and $J(-1/3)<114.1231$.
\end{lemma}

\begin{proof}
It follows from Theorem 1 of \cite{BKLNW} that
$\abs{\theta(t)-t}<\epsilon_1$
for $t\geq e^{21}$ with $\epsilon_1=4.5673\times 10^{-5}$.
Moreover, it follows from Theorems 1.2 and 1.9 of \cite{BMOBR} that
$\abs{\theta(t; 3, 1)-t/2}<\epsilon_2$
for $t\geq e^{21}$ with $\epsilon_2=5.7056\times 10^{-5}$.
Indeed, this follows from Theorem 1.2 or (1.14) of \cite{BMOBR} for $t\geq 8\times 10^9$
and Theorem 1.9 of \cite{BMOBR} for $e^{21}\leq t<8\times 10^9$.
Hence, we obtain
\begin{equation}\label{eq23a}
\begin{split}
\sum_{p>e^{21}}\frac{1}{p^2}< & ~
\frac{(1-\epsilon_1)}{21e^{21}}+(1+\epsilon_1)\int_{e^{21}}^\infty t\left(\frac{1}{t^2\log t}\right)^\prime dt \\
= & ~ \frac{2\epsilon_1}{21e^{21}}+(1+\epsilon_1)\Ei(-21)<3.45369\times 10^{-11}
\end{split}
\end{equation}
and similarly
\begin{equation}\label{eq23b}
\sum_{\substack{p>e^{21}, \\ p\equiv 1\mathmod{6}}}\frac{1}{p^2}
<\frac{2\epsilon_2}{21e^{21}}+(0.5+\epsilon_2)\Ei(-21)<1.72721\times 10^{-11}.
\end{equation}
Similarly, we have
\begin{equation}\label{eq23c}
\sum_{p>e^{21}}\frac{1}{p^{4/3}}<\frac{2\epsilon_1}{21e^7}+(1+\epsilon_1)\Ei(-7)<1.15491\times 10^{-4}
\end{equation}
and
\begin{equation}\label{eq23d}
\sum_{\substack{p>e^{21}, \\ p\equiv 1\mathmod{6}}}\frac{1}{p^{4/3}}
<\frac{2\epsilon_2}{21e^7}+(0.5+\epsilon_2)\Ei(-7)<5.775241\times 10^{-5}.
\end{equation}

Clearly we have $H(0)=\prod_p (1+h(p)+h(p^2)+h(p^3)+h(p^4))$.
Calculation gives that
$\prod_{p\leq e^{21}} (1+h(p)+h(p^2)+h(p^3)+h(p^4))=0.18115182694\cdots$.
On the other hand, we have
\begin{equation}
\begin{split}
& \log \prod_{p>e^{21}}(1+h(p)+h(p^2)+h(p^3)+h(p^4)) \\
= & ~ 
\sum_{\substack{p>e^{21}, \\ p\equiv 5\mathmod{6}}}\log \left(1+\frac{1}{p(p-1)}-\frac{2p-1}{p^2(p-1)}\right) \\
& ~ +\sum_{\substack{p>e^{21}, \\ p\equiv 1\mathmod{6}}}
\log \left(1+\frac{9}{p(p-3)}-\frac{6p+9}{p^2(p-3)}+\frac{8p+3}{p^3(p-3)}-\frac{3}{p^3(p-3)}\right) \\
< & ~ \sum_{\substack{p>e^{21}, \\ p\equiv 5\mathmod{6}}}\frac{1}{p(p-1)}
+\sum_{\substack{p>e^{21}, \\ p\equiv 1\mathmod{6}}} \frac{9}{p(p-3)} \\
< & ~ \sum_{p>e^{21}}\frac{1.001}{p^2}+\sum_{\substack{p>e^{21}, \\ p\equiv 1\mathmod{6}}} \frac{8}{p^2}
<1.72749\times 10^{-10}
\end{split}
\end{equation}
with the aid of \eqref{eq23a} and \eqref{eq23b} and therefore
\begin{equation}
0.18115182694<H(0)<0.181151827.
\end{equation}

Similarly, we have $J(-1/3)=
\prod_p (1+\abs{h(p)}p^{1/3}+\abs{h(p^2)}p^{2/3}+\abs{h(p^3)}p+\abs{h(p^4)}p^{4/3})$
and $\prod_{p\leq e^{21}} (1+\abs{h(p)}p^{1/3}+\cdots +\abs{h(p^4)}p^{4/3})<114.07025$.
With the aid of \eqref{eq23c} and \eqref{eq23d}, we have $J(-1/3)<114.123$.

Putting $W(s)=H^\prime(s)/H(s)$, we have
\begin{equation}\label{eq24}
H^\prime(s)=W(s)H(s), H^{\prime\prime}(s)=(W^\prime(s)+W^2(s))H(s)
\end{equation}
for $s>-1/2$.
We observe that for $s>0$,
\begin{equation}
\begin{split}
W(s)= & ~ \frac{G^\prime(s)}{G(s)}-\frac{2\zeta^\prime(s+1)}{\zeta(s+1)}-\frac{L^\prime(s+1, \chi)}{L(s+1, \chi)} \\
= & ~ \sum_p \frac{2\log p}{p^{s+1}-1}+\frac{\chi(p)\log p}{p^{s+1}-\chi(p)}-\frac{\rho(p)\log p}{(p-\rho(p))p^s+\rho(p)}
\end{split}
\end{equation}
and
\begin{equation}
\begin{split}
W^\prime(s)=\sum_p & \left(\frac{p^s \rho(p)(p-\rho(p))\log^2 p}{((p-\rho(p))p^s+\rho(p))^2}\right. \\
& \left.-p^{s+1}\log^2 p\left(\frac{2}{(p^{s+1}-1)^2}+\frac{\chi(p)}{(p^{s+1}-\chi(p))^2}\right)\right).
\end{split}
\end{equation}
Proceeding as above, we have $W(+0)=2.8981415\cdots$ and $W^\prime(+0)=-9.06991\cdots$.
From \eqref{eq24}, we can find approximate values for $H^\prime(0)=W(+0)H(0)$ and
$H^{\prime\prime}(0)=(W^\prime(+0)+W^2(+0))H(0)$
as desired.
\end{proof}

Now we show our approximate formula for $M_g(x)$.
\begin{theorem}\label{th21}
Let
\begin{equation}
\begin{split}
C_0= & ~ b_0 H(0)=\frac{L(1, \chi)H(0)}{2}=0.05476217\cdots, \\
C_1= & ~ (2\gamma L(1, \chi)+L^\prime(1, \chi))H(0)+L(1, \chi)H^\prime(0)=0.4841912\cdots, \\
C_2= & \frac{L(1, \chi)H^{\prime\prime}(0)}{2}+(2\gamma L(1, \chi)+L^\prime(1, \chi))H^\prime(0) \\
& +\left(C_0 L(1, \chi)+2\gamma L^\prime(1, \chi)+\frac{L^{\prime\prime}(1, \chi)}{2}\right)H(0) \\
> & 0.5367018\cdots.
\end{split}
\end{equation}
Then we have $M_g(x)=C_0 \log^2 x+C_1\log x+C_2+O^*(4444x^{-1/3})$ for $x\geq 1$.
Moreover, we have $M_g(e^{21})/21^2=B_1=0.0790359\cdots $
and $M_g(x)\geq K_i \log^2 x$ for $x\leq t_i$,
where $K_i$ and $t_i$ $(i=1, 2, \ldots)$ are constants given in a file of \cite{Ymd21}.
\end{theorem}

\begin{proof}
Lemma \ref{lm23} yields that
\begin{equation}
\begin{split}
M_g(x) & ~ =\sum_d h(d)\sum_{m<x/d}\frac{f(m)}{m} \\
= & ~ \sum_d h(d)\left(b_0 \log^2\frac{x}{d}+b_1\log\frac{x}{d}+b_2+O^*\left(\frac{c_{1/3}}{(x/d)^{1/3}}\right)\right) \\
= & ~ (b_0\log^2 x+b_1\log x+b_2)H(0)+(-2b_0\log x+b_1)H^\prime(0)+b_0 H^{\prime\prime}(0) \\
& ~ +O^*\left(\frac{c_{1/3} J(-1/3)}{x^{1/3}}\right),
\end{split}
\end{equation}
where we note that Lemma \ref{lm23} holds even when $0<d<1$.
Using Lemma \ref{lm24}, we have $J(-1/3)<114.1231$ and obtain the desired formula.
Approximate values for $C_0, C_1, C_2$ can be found using Lemma \ref{lm24}.

The remaining part of the theorem can be confirmed by calculation,
which completes the proof of Theorem \ref{th21}.
\end{proof}

We also need the estimate for the remainder term $R_w$.

\begin{lemma}\label{lm25}
For $w\geq 1$, we have
\begin{equation}
\abs{\sum_{d_1, d_2\mid P(w)} \lm(d_1)\lm(d_2)r(\LCM[d_1, d_2])}
\leq\frac{(B_2 w+B_3 w^{1/2})^2}{M_g^2(w)},
\end{equation}
where
$$B_2=\frac{1}{\zeta(2)}\prod_p \left(1+\frac{2\rho(p)}{(p-\rho(p))(p+1)}\right)<0.883697$$
and
$$B_3=\frac{1}{2}\prod_p \left(1+\frac{4\rho(p)}{(p-\rho(p))p^{1/2}}\right)
+2\prod_p \left(1+\frac{2\rho(p)}{(p-\rho(p))p^{1/2}}\right)<12.8955.$$
\end{lemma}

\begin{proof}
We proceed as in the proof of Lemma 4.2 of \cite{RSS} to obtain
\begin{equation}\label{eq25}
\sum_{d\mid P(w)} \abs{\lm(d)\rho(d)}\leq
\frac{1}{M_g(w)}\sum_{q\leq w}\frac{\mu^2(q)\sigma_\rho(q)}{\varphi_\rho(q)},
\end{equation}
where $\sigma_\rho(q)=\prod_{p\mid q}(p+\rho(p))$ and $\varphi_\rho(q)=\prod_{p\mid q}(p-\rho(p))$
for squarefree $q$.
We see that
\begin{equation}
\sum_{q\leq w}\frac{\mu^2(q)\sigma_\rho(q)}{\varphi_\rho(q)}=\sum_{q\leq w}\mu^2(q)\sum_{b\mid q}h(b)
=\sum_b \mu^2(b)h(b)\sum_{q\leq w, b\mid q}\mu^2(q),
\end{equation}
where $h(b)=\prod_{p\mid b}2\rho(p)/(p-\rho(p))$.
Proceeding as in the proof of Lemma 4.2 of \cite{RSS} again, we have
\begin{equation}\label{eq26}
\begin{split}
\sum_{q\leq w}\frac{\mu^2(q)\sigma_\rho(q)}{\varphi_\rho(q)}
\leq ~ & \frac{w}{\zeta(2)}\sum_b\frac{\mu^2(b)h(b)}{\sigma(b)}
+\sqrt{w}\left(\frac{1}{2}\sum_b\frac{h(b)2^{\omega(b)}}{\sqrt{b}}+2\sum_b\frac{h(b)}{\sqrt{b}}\right) \\
= ~ & B_2 w+B_3 w^{1/2}.
\end{split}
\end{equation}
Combining \eqref{eq25} and \eqref{eq26}, we obtain
\begin{equation}
\sum_{d\mid w} \abs{\lm(d)\rho(d)}\leq \frac{B_2 w+B_3 w^{1/2}}{M_g(w)}
\end{equation}
and the lemma follows observing that
\begin{equation}
\abs{\sum_{d_1, d_2\mid P(w)} \lm(d_1)\lm(d_2)r(\LCM[d_1, d_2])}\leq\left(\sum_{d\mid P(w)} \lm(d)\rho(d)\right)^2.
\end{equation}
Numerical values of $B_2$ and $B_3$ can be obtained proceeding as in Lemma \ref{lm24}.
\end{proof}

\section{Bounding the smallest prime factor}

Now we shall show the following result, which is the most essential part in this paper.

\begin{theorem}
If $q_1, q_2, \ldots, q_{r-1}$ and $3$ are odd distinct primes and
$N=3^\alpha (q_1 q_2 \cdots q_{r-1})^2$ is $4$-perfect, then
$q_i\leq 162751$ for at least one $i\leq r-1$.
\end{theorem}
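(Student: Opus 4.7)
The plan is to argue by contradiction: assume $q_i>Q:=8886109$ for every $i$. The identity $\sigma(N)=4N$ combined with the multiplicativity of $\sigma$ gives
\[
\prod_{i=1}^{r-1}\left(1+\frac{1}{q_i}+\frac{1}{q_i^2}\right)=\frac{8}{3-3^{-\alpha}}\in\left[\tfrac{8}{3},\,3\right],
\]
and taking logarithms yields $\log(8/3)/(1+1/Q)\leq\sum_i 1/q_i\leq\log 3/(1-1/Q)$. The lower bound is what I will need to contradict.

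As sketched in the introduction, since each $q_j$ divides $N$ only to the second power, for every prime $p\neq 3$ we have $p\mid\sigma(q_i^2)=q_i^2+q_i+1$ for at most two indices $i$. Hence, for any $w\leq Q$, at most $2\pi(w)$ of the $q_i$ are \emph{exceptional}---those for which $\sigma(q_i^2)$ admits a prime factor $\equiv 1\pmod{3}$ below $w$; denote the rest by $A$. For each $q_i\in A$ and each prime $p\leq w$ with $p\equiv 1\pmod{3}$, $q_i\bmod p$ avoids $0$ (since $q_i>Q\geq w\geq p$ forbids $q_i=p$) and the two roots of $x^2+x+1\equiv 0\pmod{p}$; for $p\equiv 2\pmod{3}$, $p>3$, only $q_i\not\equiv 0\pmod p$ is imposed. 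This realises precisely the density $\rho(p)=0,3,1$ of Section~\ref{LSEE}, so Lemma~\ref{lm0} gives
\[
N_A(X):=|A\cap[1,X]|\leq\frac{X+w^2}{M_g(w)}\qquad\text{for all }X\geq 1.
\]

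Partial summation, using $N_A(Q^{-})=0$ together with the sieve bound up to a crossover $T$ at which it meets the trivial cap $N_A\leq r-1-2\pi(w)$, then produces
\[
\sum_{q_i\in A}\frac{1}{q_i}\leq\frac{\log(T/Q)+w^2/Q+1}{M_g(w)},\qquad T=(r-1-2\pi(w))M_g(w)-w^2,
\]
and adding the exceptional contribution $\leq 2\pi(w)/Q$ yields an explicit upper bound for $\sum_i 1/q_i$. Substituting the lower bound for $M_g(w)$ furnished by Theorem~\ref{th21} (for $w\geq x_1$) or the tabulated values $K_i$ of \cite{Ymd21} (for $w<x_1$), and choosing $w$ of order roughly $\sqrt{Q}$ so that $w^2/Q$ stays $O(1)$, this upper bound should be forced below $\log(8/3)/(1+1/Q)$, yielding the contradiction. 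Making $T$ quantitative either requires a separate explicit bound on $q_{\max}$---via $q_{\max}\mid\sigma(q_j^2)$ for some $j<r-1$, iterated, combined with the abundancy upper bound $\sum_i 1/q_i\leq\log 3/(1-1/Q)$ to control $r$---or a careful two-parameter optimisation.

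The principal obstacle is the tightness of the numerical margin. With $C\approx 0.0548$ and $\log Q\approx 16.0$, the gap between the lower bound $\log(8/3)\approx 0.981$ and the sieve-derived upper bound closes only narrowly at the optimal $w$, which is precisely why the threshold $Q=8886109$ (rather than something substantially smaller) emerges. Extracting every digit of precision from the explicit constants in Theorem~\ref{th21} and from the small-$w$ table $K_i,t_i$ of \cite{Ymd21}, while simultaneously handling the dependence on $T$ (and hence on $r$ and $q_{\max}$), is where most of the work will lie.
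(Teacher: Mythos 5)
Your global strategy---sieve out the non-exceptional $q_i$, sum reciprocals by partial summation, and contradict the abundancy constraint---matches the paper's, but the crucial technical step is done differently in a way that does not work. You run the sieve with a single fixed level $w$ and truncate the partial-summation integral at a crossover $T$ where the sieve estimate $\frac{X+w^2}{M_g(w)}$ meets the trivial cap $r-1-2\pi(w)$. The resulting upper bound for $\sum_i 1/q_i$ then contains a $\log T\approx\log r+\log M_g(w)$ term, so you need an a priori bound on $r$. But no such bound is available: the abundancy identity controls $\sum_i 1/q_i$, not the number of summands, and there is no elementary bound on the number of prime factors of an odd $4$-perfect number. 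Your proposed remedies do not repair this. Iterating $q_{\max}\mid\sigma(q_j^2)$ only gives $q_{\max}<2q_j^2\le 2q_{\max}^2$, which is vacuous (and $q_{\max}$ could instead divide $\sigma(3^\alpha)$, whose size is likewise uncontrolled). Without a bound on $T$, the integral $\int_Q^T\frac{t+w^2}{M_g(w)\,t^2}\,dt$ diverges as $T\to\infty$ for fixed $w$, so the fixed-$w$ argument genuinely fails.

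The paper avoids this by letting the sieve level vary with the scale: for each $t$ it takes $w=\sqrt{t}/L$ and obtains
\[
\#S_0(t)\le\frac{4\sqrt{t}}{L}+\frac{\left(\frac{1}{3}+\frac{1}{L^2}+\frac{1}{3e^{12}}\right)t}{M_g(\sqrt{t}/L)},
\]
the exceptional count itself being bounded by $2\pi(\sqrt{t}/L;3,1)\le 2\sqrt{t}/L$ at the same scale. Since $M_g(\sqrt{t}/L)\gg\log^2 t$, the density $\#S_0(t)/t$ tends to zero and $\int_x^\infty \#S_0(t)\,t^{-2}\,dt$ converges, with no reference to $r$ or $q_{\max}$; the substitution $u=\log(\sqrt{t}/L)$ turns it into $2\int du/M_g(e^u)$. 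This family-of-sieves device is the key idea your proposal is missing. Two smaller points: the paper sieves the integers $n$ with $q=6n\pm1$ in an interval of length about $x/6$ rather than the primes in $[1,x]$, which is required by the normalisation $\rho(2)=\rho(3)=0$ and gains roughly a factor of $6$; and the paper still needs a separate computation of $\sum_{p\le e^{25},\,p\in S_0}1/p<0.00633$, exploiting that the chain $p\to\underline{p_3}(p^2+p+1)\to\cdots$ stays above $e^{16}$, a refinement your outline omits and which is needed to close the narrow numerical margin.
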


We write $\underline{p_3}(n)$ for the smallest prime factor of an integer $n$ congruent to $1\mathmod{3}$.
Assume that $N=3^\alpha (q_1 q_2 \cdots q_{r-1})^2$ is $4$-perfect and $q_i\geq 162779$ for all $i\leq r-1$.
Let $x\geq e^{12}$ and $w\geq e^7$ be real numbers satisfying $w=\sqrt{x}$
and $S(x)$ denote the set of integers $n\leq x$ in $S$ for a set $S$ of positive integers.
We divide the set $S_0=\{q_1, q_2, \ldots, q_{r-1}\}$ of prime divisors of $N$ other than three into two sets
$S_1$ and $S_2$, where $S_1=\{p: p\mid N, \underline{p_3}(p^2+p+1)>w\}$ and $S_2=S_0\backslash S_1$.

Hence, if $p\in S_1$, then $p\neq 3$ and $\sigma(p^2)=p^2+p+1$ has no prime divisor $\leq w$ other than three.
Clearly, $p^2\mid\mid N$ for any prime $p$ in $S_0=S_1\cup S_2$.

We set $A=A^{(x)\pm}=\{1\leq n\leq (x\mp 1)/6\}$, where we exclude multiples of two or three in order to
obtain better upper bounds for $\#S_0(x)$.
Moreover, we set $A_p=A_p^{(x)\pm}$ to be the solution set of the congruence
$(6n\pm 1)((6n\pm 1)^2+(6n\pm 1)+1)\equiv 0\mathmod{p}$ for any odd prime $p\geq 5$
and $A_2=A_3=\emptyset$.

Let $\rho(p)$ and $g(m)$ be the functions defined in Section \ref{LSEE}.
We immediately see that $\rho(p)=3$ for any prime $p\equiv 1\mathmod{3}$,
$\rho(p)=1$ for any prime $p\equiv 2\mathmod{3}$, and $\rho(2)=\rho(3)=0$.
It is clear that if $p=6n\pm 1$ is a prime greater than $w$ such that
$p^2+p+1$ has no prime divisor below $w$ other than three,
then $n$ must belong to $S(A, w)$.
Thus, we obtain
\begin{equation}
\# S_1(x)\leq 2w+\# S(A^{(x)+}, w)+\# S(A^{(x)-}, w)
\end{equation}
and Lemma \ref{lm0} gives
\begin{equation}\label{eq31}
\# S_1(x)\leq \frac{2((x+1)/6+w^2)}{M_g(w)}+\frac{2(B_2 w+B_3 w^{1/2})^2}{M_g^2(w)}
\end{equation}
with $M_g(x)$ satisfying the inequalities given in Theorem \ref{th21}.

For each $p\in S_2$, $p^2+p+1$ has a prime divisor $q\leq w$ other than three.
On the other hand, for each prime $q$ with $3<q\leq w$,
there exist at most two primes $p\in S_1$ such that $q$ divides $p^2+p+1$
as observed in the Introduction.
So that, $\# S_2(x)\leq 2\pi(w; 3, 1)\leq 2w$.
Hence, setting $w=\sqrt{x}$, we have
\begin{equation}
\# S_0(x)\leq 4\sqrt{x}
+\frac{\displaystyle (1+x^{-1})x}{3M_g(\sqrt{x})}
+\frac{\displaystyle 2(B_2+B_3 x^{-1/4})^2 x}{M_g^2(\sqrt{x})}.
\end{equation}
and partial summation gives
\begin{equation}\label{eq32}
\begin{split}
\sum_{p>x, p\in S_0}\frac{1}{p}\leq & -\frac{\#S_0(x)}{x}+\int_x^\infty \frac{\#S_0(t)}{t^2} dx \\
< & \frac{8}{\sqrt{x}}
+\int_x^\infty \frac{1+t^{-1}}{3M_g(\sqrt{t})}
+\frac{2(B_2+B_3 t^{-1/4})^2}{M_g^2(\sqrt{t})}\frac{dt}{t}.
\end{split}
\end{equation}
We observe that if there exists a function $\eta(t)$ such that
$M_g(t)>\eta(\log t)$ for $e^{2a}\leq t\leq e^{2b}$, then
\begin{equation}
\int_{e^{2a}}^{e^{2b}} \frac{dt}{t M_g^v(\sqrt{t})}
<\int_{e^{2a}}^{e^{2b}} \frac{dt}{t\eta^v(\log(\sqrt{t}))}=2\int_a^b \frac{du}{\eta^v(u)},
\end{equation}
where we put $u=\log(\sqrt{t})$ and $v\in \{1, 2\}$.
Hence, using Theorem \ref{th21}, the last integral in \eqref{eq32} is at most
\begin{equation}\label{eq33}
\begin{split}
& \int_{22.1}^\infty \frac{2(1+e^{-44.2})}{3F(u)}
+\frac{4(B_2+B_3 e^{-u/2})^2)}{F^2(u)}du \\
& +\int_{21}^{22.1} \frac{2(1+e^{-2u})}{3\times 21^2 B_1}+\frac{4(B_2+B_3 e^{-u/2})^2}{21^4 B_1^2} du \\
& +\int_{\log w}^{21} \frac{2(1+e^{-2u})}{M_g(e^u)}+
\frac{4(B_2+B_3 e^{-u/2})^2}{M_g^2(e^u)} du,
\end{split}
\end{equation}
where $F(u)=C_0 u^2+C_1 u+C_2-4444e^{-u/3}$.

We put $x=e^{18.6}$, which yields that $w=e^{9.3}$ and $\log w=9.3$.
We calculate
\begin{equation}
\int_{22.1}^\infty \frac{2(1+e^{-44.2})}{3F(u)}+\frac{4(B_2+B_3 e^{-u/2})^2)}{F^2(u)}du<0.4835711.
\end{equation}
Clearly, we have
\begin{equation}
\begin{split}
& \int_{21}^{22.1} \frac{2(1+e^{-2u})}{3\times 21^2 B_1}+\frac{4(B_2+B_3 e^{-u/2})^2}{21^4 B_1^2} du \\
< ~ & \frac{1.6(1+e^{-42})}{1323B_1}+\frac{4(B_2+B_3e^{-10.5})^2}{194481B_1^2}
<0.0230983.
\end{split}
\end{equation}
Finally, we confirmed by calculation with the aid of Theorem \ref{th21} that
\begin{equation}
\int_{9.3}^{21} \frac{2(1+e^{-2u})}{M_g(e^u)}+\frac{4(B_2+B_3 e^{-u/2})^2}{M_g^2(e^u)} du<0.461243.
\end{equation}

Inserting these estimates into \eqref{eq33}, we have
\begin{equation}
\int_x^\infty\frac{1+t^{-1}}{3M_g(\sqrt{t})}+\frac{2(B_2+B_3 t^{-1/4})^2}{M_g^2(\sqrt{t})}\frac{dt}{t}<0.9679124
\end{equation}
and \eqref{eq32} gives that
\begin{equation}\label{eq34}
\sum_{p>e^{18.6}, p\in S_0}\log\frac{p}{p-1}<0.968644.
\end{equation}

If $p\in S_0$, then $p>e^{12}$, $p_1=\underline{p_3}(p^2+p+1)>e^{12}$ and $\underline{p_3}(p_1^2+p_1+1)>e^{12}$
since $N$ has no prime factor $p$ with $5\leq p\leq e^{12}$ by our assumption.
We calculated the sum of reciprocals of such primes below $e^{18.6}$ to obtain
\begin{equation}
\sum_{p\leq e^{18.6}, p\in S_0}\frac{1}{p}<0.0109344.
\end{equation}
and, since $N$ has no prime factor $p$ with $5\leq p\leq e^{12}$,
\begin{equation}
\sum_{p\leq e^{18.6}, p\in S_0}\log\frac{p}{p-1}<
\frac{e^{12}}{e^{12}-1}\sum_{p\leq e^{18.6}, p\in S_0}\frac{1}{p}<0.010935.
\end{equation}
Combined with \eqref{eq34}, we obtain
\begin{equation}
\sum_{p\in S_0}\log\frac{p}{p-1}<0.979579<\log\frac{8}{3}.
\end{equation}
Thus, we conclude that
\begin{equation}
\frac{\sigma(N)}{N}<\frac{3}{2}\prod_{p\in S_0}\frac{p}{p-1}<4,
\end{equation}
which is a contradiction.
This proves the theorem.

\section{Proof of Theorem \ref{th1}}

We assume that $N=3^\alpha (q_1 q_2 \cdots q_{r-1})^2$ with $3, q_1, q_2, \ldots, q_{r-1}$ distinct odd primes
is an odd $4$-perfect number.
We write $p\rightarrow q$ when $q=\underline{p_3}(p^2+p+1)$.
Clearly, if $p$ divides $N$ and $p\rightarrow q$, then $q$ must also divide $N$.

We begin by proving that $7$ cannot divide $N$.
If $7$ divides $N$, then
$7^3\mid \sigma(79^2)\sigma(331^2)\mid N$ since
$7\rightarrow 19\rightarrow 127\rightarrow 5419\rightarrow 31\rightarrow 331\rightarrow 7$,
$\sigma(5419^2)=3\times 31\times 313\times 1009$, $313\rightarrow 181\rightarrow 79$,
and $\sigma(79^2)=3\times 7^2\times 43$.
Clearly, this contradicts the assumption that $N/3^\alpha$ is cubefree.

Nextly, we see that neither $11$ nor $13$ divides $N$.
If $11$ divides $N$, then $7\mid \sigma(11^2)\mid N$, which is impossible as we have observed above.
If $13$ divides $N$, then
$13\rightarrow 61$,
$\sigma(61^2)=3\times 13\times 97$,
$97\rightarrow 3169\rightarrow 3348577\rightarrow 181\rightarrow 79\rightarrow 7$,
and therefore $7\mid N$, which is impossible again.

In a similar way, we checked that no prime $p$ with $17\leq p\leq 162751$ can be the smallest prime factor of $N$.
Indeed, computer search revealed that the assumption that a prime $p$ with $11\leq p\leq 162751$ divides $N$
eventually yields that a prime $q<p$ must divide $N$, which is a contradiction.
For each prime $p$ with $11\leq p\leq 162751$,
we confirmed that $p\rightarrow p_1\rightarrow \cdots \rightarrow p_k$
for primes $p_1, p_2, \ldots, p_k$ all greater than $3$ such that
$p_i<2^{128}$ or $p_i\rightarrow p_{i+1}$ with $p_{i+1}<2^{32}$ for $i=1, \ldots, k-1$ and $p_k<p$ by PARI-GP.

Hence, no prime $p$ with $7\leq p\leq 162751$ can divide $N$.
So that, the smallest prime factor of $N$ other than three must be at least $162779>e^{12}$,
which is impossible as shown in the previous section.
This completes the proof of Theorem \ref{th1}.

\section{Proof of Theorem \ref{th0}}

In this section, we shall prove Theorem \ref{th0}.
Assume that an integer $N$ in the form \eqref{eq11} is $k$-perfect.
Since the case $k=2$ has been settled by \cite{Ymd3}, we assume that $k>2$.
Moreover, assume that $k=k_1 \ell^t$ for some integers $t\geq 0$ and $k_1\nequiv 0, 1\mathmod{\ell}$
if $2\beta+1=\ell^g$.

We begin by proving that $2\beta+1$ cannot be a power of $q_r$.
Indeed, if $2\beta+1=q_r^g$ for some integer $g>0$, then,
since $\sigma(q_r^\alpha)=\sigma(\ell^\alpha)\equiv 1\mathmod{\ell}$
and any prime factor of $\sigma(p^{2\beta})$ other than $q_r$ must be congruent to $1\mathmod{q_r}$,
we see that $\sigma(N)=q_r^{\alpha_1}M$ for certain integers $\alpha_1\geq \alpha$ and $M\equiv 1\mathmod{q_r}$.
On the other hand, we have $q_i^{2\beta}\equiv 1\mathmod{q_r}$ for each $i$
and therefore $N/q_r^\alpha\equiv 1\mathmod{q_r}$.
Thus, $k=k_1 q_r^t$ for integers $t\geq 0$ and $k_1\equiv 1\mathmod{q_r}$,
which contradicts our assumption.

Now we can take a prime factor $\ell\neq q_r$ of $2\beta+1$ since $2\beta+1$ cannot be a power of $q_r$.
We write $k=k_1 \ell^t$ with integers $t\geq 0$ and $k_1\nequiv 0\mathmod{\ell}$.
Let $i_0$ be the index for $q_{i_0}=\ell$ if it exists and
\begin{equation*}
\begin{split}
S= & \{i\leq r-1:q_i\equiv 1\mathmod{\ell}\}, \\
T= & \{i\leq r-1:q_i\nequiv 1\mathmod{\ell}, i\neq i_0, q\mid \sigma(q_i^{2\beta})\text{ for some }q\neq q_r\}, \\
U= & \{i\leq r-1:q_i\nequiv 1\mathmod{\ell}, i\neq i_0, q\nmid\sigma(q_i^{2\beta})\text{ for any }q\neq q_r\}. \\
\end{split}
\end{equation*}
Hence, we can write $\{i: 1\leq i\leq r-1\}=S\cup T\cup U\cup \{i_0\}$.
Moreover, $\Omega_0(k)$ denotes the number of odd distinct prime divisors of $k$
and $\Omega_1(k)$ denotes the number of prime divisors of $k$ congruent to $1\mathmod{\ell}$,
both counted with multiplicity.

Like Lemmas 3.1 and 3.2 of \cite{Ymd1}, we see that $\# S\leq 2\beta+t$ and $\# T\leq 2\beta(2\beta+t)+\Omega_1(k)$
respectively.
If $i\in U$, then $\sigma(q_i^{2\beta})=q_r^{\xi_i}$ for some integer $\xi_i>0$.
By the theorem of Bang\cite{Ban} or Zsigmondy\cite{Zsi}, $2\beta+1$ must be prime.
Proceeding as in Lemma 3.3 of \cite{Ymd1}, we have
\begin{equation}
2(2\beta)^{\# U}-1\leq \omega(kN)\leq \# U+\omega(k)+\# S+\# T+2.
\end{equation}
Since $u\leq 2^{u-1}$ for $u\geq 3$, we have $\# U\leq 2$ or
\begin{equation}
\frac{3}{2}(2\beta)^{\# U}\leq \omega(k)+\# S+\# T+3\leq \omega(k)+(2\beta+1)(2\beta+t)+\Omega_1(k)+3.
\end{equation}
Noting that $\Omega_1(k)+t=\Omega_0(k)$, we have
\begin{equation}
\# U\leq \max\left\{2,
\frac{\frac{2}{3}(\omega(k)+(2\beta+1)(2\beta+\Omega_0(k)))+2}{\log(2\beta)}\right\}.
\end{equation}

We can easily see that $\Omega_0(k)\leq (\log k)/(\log 3)$ and $\omega(k)\leq \log k+2-\log 6<\log k+0.20825$.
Now we show that
\begin{equation}\label{eq51}
\log k<\log\log r+0.24351.
\end{equation}
Indeed, if $k\geq 8$, then, with the aid of Theorem 5.9 and Proposition 5.15 in \cite{Dus}, we have
\begin{equation}
k<\prod_{i=2}^{r+1}\frac{P_i}{P_i-1}<0.9\log P_{r+1}<1.17\log r,
\end{equation}
where $P_i$ denotes the $i$-th prime, and therefore $\log k<\log\log r+0.24$, implying \eqref{eq51}
(but Ramar\'{e}'s zero density estimate in \cite{Ram2}, on which Dusart's estimates in \cite{Dus} are based,
is objected by \cite{BKLNW}.
Corollary 11.2 in \cite{BKLNW} can also be used to obtain Dusart's estimates.
In practice, Theorems 3 and 8 in \cite{RS} suffice for our purpose).
If $3\leq k\leq 7$, then $r\geq r_0$ with
$(k, r_0)=(3, 12)$ by Hagis \cite{Hag}, Kishore \cite{Kis}, or Reidlinger \cite{Rei}
and $(4, 23)$, $(5, 56)$, $(6, 142)$, and $(7, 373)$ by Nakamura \cite{Nak}.
Hence, we obtain $\log k<\log\log r+\log 4-\log\log 23<\log\log r+0.24351$ and \eqref{eq51} holds.

Now we have $\Omega_0(k)<(\log\log r+0.24351)/\log 3$ and $\omega(k)<\log\log r+0.4518$.
So that, we have
\begin{equation}
\# S+\# T<(2\beta+1)\left(2\beta+\frac{\log\log r+0.24351}{\log 3}\right).
\end{equation}
Moreover, $\# U\leq 2$ or
\begin{equation}\label{eq52}
\# U<\frac{\displaystyle\frac{2}{3}\left((2\beta+1)\left(2\beta+\frac{\log\log r+0.24351}{\log 3}\right)+\log\log r\right)+2.3012}{\log(2\beta)}.
\end{equation}
Thus, we have
\begin{equation}\label{eq53}
\begin{split}
& r<2+(2\beta+1)\left(2\beta+\frac{\log\log r+0.24351}{\log 3}\right)+ \\
& \max\left\{2, \frac{\displaystyle\frac{2}{3}\left((2\beta+1)\left(2\beta+\frac{\log\log r+0.24351}{\log 3}\right)+\log\log r\right)+2.3012}{\log(2\beta)}\right\}.
\end{split}
\end{equation}
For $\beta=1$, $2$, $3$, $4$, $5$, $6$, and $7$, this implies that $r\leq 14$, $30$, $56$, $90$, $132$, $182$,
and $240$ respectively.

If $\beta\geq 8$, then, setting $\delta$ to be the number of indices $i$ in $T$ such that
$\sigma(q_i^{2\beta})=p q_r^\xi$ for some prime $p\neq q_r$ and some exponent $\xi$
(we note that $p=q_j$ for some $j\in S$ since $i\in T$)).
Then, proceeding as in the proof of Lemma 3.2 of \cite{Ymd2}, we have
$\delta+2(\# T-\delta)\leq 2\beta(2\beta+t)+\Omega_1(k)$ or, in other words,
$\# T\leq \beta(2\beta+t)+(\delta+\Omega_1(k))/2$.
From \cite{Ymd3}, we can deduce that $\delta\leq 4\# S\leq 4(2\beta+t)$ and
therefore $\# T\leq (\beta+2)(2\beta+t)+\Omega_1(k)/2$.
Thus, instead of \eqref{eq52} and \eqref{eq53}, we have
\begin{equation}\label{eq54}
\# U<\frac{\displaystyle\frac{2}{3}\left((\beta+3)\left(2\beta+\frac{\log\log r+0.24351}{\log 3}\right)+\log\log r\right)+2.3012}{\log(2\beta)}
\end{equation}
and
\begin{equation}\label{eq55}
\begin{split}
& r<2+(\beta+3)\left(2\beta+\frac{\log\log r+0.24351}{\log 3}\right)+ \\
& \max\left\{2, \frac{\displaystyle\frac{2}{3}\left((\beta+3)\left(2\beta+\frac{\log\log r+0.24351}{\log 3}\right)+\log\log r\right)+2.3012}{\log(2\beta)}\right\}
\end{split}
\end{equation}
respectively.
This implies that $r<\beta^e$ and we can deduce from \eqref{eq54} that $\# U\leq 2$.
Hence, using \eqref{eq55} again, we conclude that
\begin{equation}
r<\left(2\beta+\frac{\log\log \beta+1.24351}{\log 3}\right)(2\beta+1)+4,
\end{equation}
as we desired.
This proves Theorem \ref{th0}.

{}

\begin{thebibliography}{99}

\bibitem{Alt}
Sebastian Zuniga Alterman, Explicit averages of square-free supported functions:
to the edge of the convolution method,
\textit{Colloq. Math.} \textbf{168} (2022), 1--23.

\bibitem{Ban}
A. S. Bang,
Taltheoretiske Unders{\o}gelser,
\textit{Tidsskrift~Math.} \textbf{5 {IV}} (1886), 70--80 and 130--137.

\bibitem{BMOBR}
Michael A. Bennett, Greg Martin, Kevin O'Bryant and Andrew Rechnitzer,
Explicit bounds for primes in arithmetic progressions,
\textit{Illinois J. Math}. \textbf{62} (2018), 427--532.

\bibitem{BBR}
D. Berkane, O. Bordell\`{e}s, and O. Ramar\'{e},
Explicit upper bounds for the remainder terms in the divisor problem,
\textit{Math. Comp.} \textbf{81} (2012), 1025--1051.

\bibitem{BKLNW}
Samuel Broadbent, Habiba Kadiri, Allysa Lumley, Nathan Ng, and Kirsten Wilk,
Sharper bounds for the Chebyshev function $\theta(x)$,
\textit{Math.~Comp.} \textbf{90} (2021), 2281--2315.

\bibitem{BZ}
Kevin A. Broughan and Qizhi Zhou,
Odd multiperfect numbers of abundancy $4$,
\textit{J. Number Theory} \textbf{128} (2008), 1566--1575.

\bibitem{Coh}
G. L. Cohen, On the largest component of an odd perfect number,
\textit{J. Austral. Math. Soc. Ser. A} \textbf{42} (1987), 280--286.

\bibitem{CW}
G. L. Cohen and R. J. Williams, Extensions of some results concerning odd perfect numbers,
\textit{Fibonacci Quart.} \textbf{23} (1985), 70--76.

\bibitem{Dus}
Pierre Dusart, Explicit estimates of some functions over primes,
\textit{Ramanujan J.} \textbf{45} (2018), 227--251.

\bibitem{FNO}
S. Adam Fletcher, Pace P. Nielsen and Pascal Ochem,
Sieve methods for odd perfect numbers,
\textit{Math. Comp.} \textbf{81} (2012), 1753--1776.

\bibitem{Gre}
G. Greaves, \textit{Sieves in Number Theory}, Springer-Verlag, Berlin, 2001.

\bibitem{Hag}
Peter Hagis Jr., A new proof that every odd triperfect number has at least twelve prime factors,
\textit{A tribute to Emil Grosswald: number theory and related analysis},
\textit{Contemp. Math.} \textbf{143}, Amer. Math. Soc. 1993, 445--450.

\bibitem{HMD}
Peter Hagis Jr. and Wayne L. McDaniel, A new result concerning the structure of odd perfect numbers,
\textit{Proc. Amer. Math. Soc.} \textbf{32} (1972), 13--15.

\bibitem{HR}
Heini Halberstam and Hans-Egon Richert, \textit{Sieve Methods}, 2nd edition,
Dover Publication, 2011.

\bibitem{IK}
H. Iwaniec and Kowalski, \textit{Analytic Number Theory}, American Mathematical Society, Providence, RI, 2004.

\bibitem{Kan1}
H.-J. Kanold, Untersuchungen \"{u}ber ungerade vollkommene Zahlen,
\textit{J. Reine Angew. Math.} \textbf{183} (1941), 98--109.

\bibitem{Kan2}
Hans-Joachim Kanold,
\"{U}ber mehrfach vollkommene Zahlen. II,
\textit{J. Reine Angew. Math.} \textbf{197} (1957), 82--96.

\bibitem{Kis}
Masao Kishore, Odd triperfect numbers are divisible by twelve distinct prime factors,
\textit{J. Austral. Math. Soc. Ser. A} \textbf{42} (1987), 173--182.

\bibitem{Mc}
Wayne L. McDaniel,
The non-existence of odd perfect numbers of a certain form,
\textit{Arch. Math. (Basel)} \textbf{21} (1970), 52--53.

\bibitem{MDH}
Wayne L. McDaniel and P. Hagis Jr.,
Some results concerning the non-existence of odd perfect numbers of the form $p^\alpha M^{2\beta}$,
\textit{Fibonacci Quart.} \textbf{13} (1975), 25--28.

\bibitem{Mor}
Pieter Moree, Chebyshev's bias for composite numbers with restricted prime divisors,
\textit{Math. Comp.} \textbf{73} (2004), 425--449.

\bibitem{Nag}
T. Nagell, \textit{Introduction to Number Theory}, Second edition, Chelsea, New York, 1964.

\bibitem{Nak}
Shigeru Nakamura, On some properties of $\sigma(n)$,
\textit{J. Tokyo Univ. Merc. Marine (Nat. Sci.)} \textbf{35} (1984), 85--93.

\bibitem{Nie1}
Pace P. Nielsen,
An upper bound for odd perfect numbers,
\textit{Integers} \textbf{3} (2003), \#A14.

\bibitem{Nie2}
Pace P. Nielsen,
Odd perfect numbers, diophantine equations, and upper bounds,
\textit{Math. Comp.} \textbf{84} (2015), 2549--2567.

\bibitem{Ram1}
Olivier Ramar\'{e},
On \v{S}nirel'man's constant,
\textit{Ann. Scuola Norm. Sup. Pisa Cl. Sci.} (4) \textbf{22} (1995), 645--706.

\bibitem{Ram2}
O. Ramar\'{e}, An explicit density estimate for Dirichlet $L$-series,
\textit{Math. Comp.} \textbf{85} (2016), 325--356.

\bibitem{RA}
O. Ramar\'{e}, P. Akhilesh,
Explicit averages of non-negative multiplicative functions: going beyond the main term,
\textit{Colloq. Math.} \textbf{147} (2017), 275--313.

\bibitem{RSS}
Olivier Ramar\'{e}, Priyamvad Srivastav, and Oriol Serra,
Product of primes in arithmetic progressions,
\textit{Int. J. Number Theory} \textbf{16} (2020), 747--766.

\bibitem{Rei}
Herwig Reidlinger, {\"U}ber ungerade mehrfach vollkommene Zahlen, 
\textit{{\"O}sterreich. Akad. Wiss. Math.-Natur. Kl. Sitzungsber. II} \textbf{192} (1983), 237--266.

\bibitem{RV}
H. Riesel and R. C. Vaughan,
On sums of primes,
\textit{Ark. Mat.} \textbf{21} (1983), 45--74.

\bibitem{RS}
J. B. Rosser and L. Schoenfeld, Sharper bounds for the Chebyshev functions $\theta(x)$ and $\psi(x)$,
\textit{Math. Comp.} \textbf{29} (1975), 243--269.

\bibitem{St}
R. Steuerwald, Versch{\"a}rfung einer notwendigen Bedingung f{\"u}r die Existenz einen ungeraden vollkommenen Zahl,
\textit{S.-B. Bayer. Akad. Wiss.} 1937, 69--72.

\bibitem{War}
D. R. Ward, Some series involving Euler's function,
\textit{J. London Math. Soc.} \textbf{2} (1927), 210--214.

\bibitem{Wir}
E. Wirsing, Das asymptotische verhalten von Summen \"{U}ber multiplikative Funktionen. II,
\textit{Acta Math. Acad. Sci. Hungar.} \textbf{18} (1967), 411--467.

\bibitem{Ymd1}
Tomohiro Yamada, Odd perfect numbers of a special form,
\textit{Colloq. Math.} \textbf{103} (2005), 303--307.

\bibitem{Ymd05}
Tomohiro Yamada, On the divisibility of odd perfect numbers by a high power of a prime,
preprint, \url{https://arxiv.org/abs/math/0511410}.

\bibitem{Ymd2}
Tomohiro Yamada, A new upper bound for odd perfect numbers of a special form,
\textit{Colloq. Math.} \textbf{156} (2019), 15--23, corrigendum, {\it ibid.} {\bf 162} (2020), 311--313.

\bibitem{Ymd19}
Tomohiro Yamada, Quasiperfect numbers with the same exponent,
\textit{Integers} \textbf{19} (2019), \#A35.

\bibitem{Ymd20}
Tomohiro Yamada, On the divisibility of odd perfect numbers, quasiperfect numbers
and amicable numbers by a high power of a prime,
\textit{Integers} \textbf{20} (2020), \#A91.

\bibitem{Ymd3}
Tomohiro Yamada, An exponential diophantine equation related to odd perfect numbers,
\textit{Acta Math. Univ. Comenianae} \textbf{90} (2021), 145--155.

\bibitem{Ymd21}
Tomohiro Yamada, list files in PARI-GP format (plain text),
\url{https://drive.google.com/drive/folders/1k6kStSxjl_KKiaOqsjjrOl-c8CR6tsDP}

\bibitem{Zsi}
K. Zsigmondy, Zur Theorie der Potenzreste,
\textit{Monatsh. f{\"u}r Math.} \textbf{3} (1892), 265--284.


\end{thebibliography}
\end{document}